\theoremstyle{plain}
\newtheorem{thm}[]{Theorem}
\newtheorem{ex}[thm]{Example}
\newtheorem{prop}[thm]{Proposition}
\newtheorem{lemma}[thm]{Lemma}
\newtheorem{cor}[thm]{Corollary}
\newtheorem{nt}[thm]{Notation}
\newtheorem*{nt*}{Notation}
\theoremstyle{remark}
\newtheorem{defn}[thm]{Definition}
\newtheorem*{q*}{Question}
\newtheorem{rem}[thm]{Remark}
\newcommand{\bR}{\mathbb{R}}
\newcommand{\bC}{\mathbb{C}}
\newcommand{\bN}{\mathbb{N}}
\newcommand{\bS}{\mathbb{S}}
\newcommand{\iA}{\mathscr{A}}
\newcommand{\iD}{\mathscr{D}}
\newcommand{\iO}{\mathscr{O}}
\newcommand{\Apb}{\iA^{\rm h}}
\newcommand{\Dpb}{\iD^{\rm h}}
\newcommand{\gS}{\mathfrak{S}}
\newcommand{\g}{\mathfrak{g}}
\newcommand{\ggl}{\mathfrak{gl}}
\newcommand{\gsl}{\mathfrak{sl}}
\newcommand{\gln}{\mathfrak{gl}_{n}}
\title[Weight systems which are quantum states]{A note on weight systems which are quantum states}
\author{Carlo Collari}
\address{Dipartimento di Matematica, Universit\`a di Pisa, Largo Bruno Pontecorvo 5, 56127 Pisa, IT}
\email{\tt carlo.collari@dm.unipi.it}
\keywords{Weight systems, horizontal chord diagrams, quantum states}
\subjclass[2020]{57K16, 05E15, 57K12}
\let\origmaketitle\maketitle
\def\maketitle{
  \begingroup
  \def\uppercasemath##1{} 
  \origmaketitle
  \endgroup
}
\begin{document}
\begin{abstract}
A result of Corfield, Sati, and Schreiber asserts that  $\gln$-weight systems associated to the defining representation are quantum states. In this short note we extend this result to all  $\gln$-weight systems corresponding to labelling by symmetric and exterior powers of the defining representation.
\end{abstract}

\maketitle
\section{Introduction}

Weight systems and chord diagrams are central objects in the study of finite-type invariants, and Chern-Simons theory, in different contexts.
Universal Vassiliev-type invariants typically take values in a space of diagrams -- e.g.~Jacobi, chord, etc. -- and weight systems are used to recover specific invariants from the universal ones. 
General references for these topics are, for instance, \cite{ChVassiliev, JacMofIVI, Ohtsuki}.

To study finite type invariant for braids, rather than links, a relevant space of diagrams is the space of horizontal chord diagrams.
Recently, horizontal chord diagrams appeared as natural objects in the study of a mathematical framework for quantum physics of branes \cite{corfield2021fundamental, sati2020differential}.
Roughly speaking, an {horizontal chord diagram} on $N$-strands is given by $N$ lines oriented upwards (\emph{strands} or \emph{Wilson lines}), and a number (possibly none) of horizontal segments (\emph{chords}) joining pairs of strands -- cf.~Figure~\ref{fig:horizontal chords}.(a). 
One advantage of horizontal chord diagrams with respect to multi-circle chord diagrams is that they can be endowed with a natural algebraic structure.
The algebra of horizontal chord diagrams $\Apb$ is defined as the algebra generated by (formal) complex linear combinations of horizontal chord diagrams, where the multiplication is given by stacking two diagrams one on top of the other (cf.~Figure~\ref{fig:horizontal chords}.(b)), modulo the infinitesimal pure braid relations (see~Figure~\ref{fig:horizontal chords relations}). Further, we have an anti-linear involution~$\star$~on~$\Apb$ which acts on each horizontal chord diagram by~reversing the direction of the strands, and reflecting the page along an horizontal axis. This involution endows~$\Apb$ with the structure of complex $\star$-algebra (Definition~\ref{def:star-alg}).
In \cite{sati2020differential} (see also~\cite[Section~4]{corfield2021fundamental}), the $\star$-algebra of horizontal chord diagrams~$\Apb$ was interpreted as "higher observables" on certain brane moduli.
This interpretation is coherent with some expected effects in the quantum theory of branes.

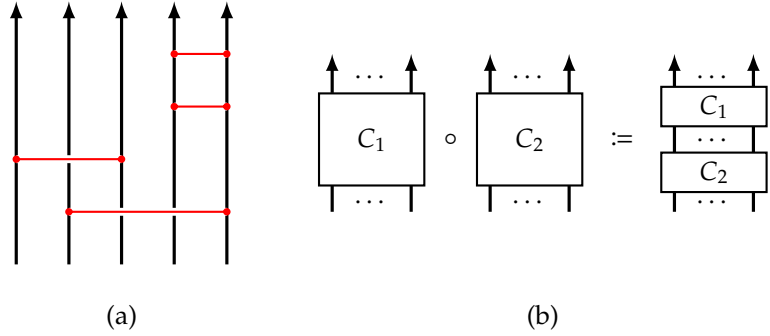
\begin{figure}[]
    \centering
    \begin{tikzpicture}[scale =.35]
        \foreach \x in {-4,-2,0,2,4}{\draw[very thick, -latex] (\x,0) -- (\x, 10);}
        
        \draw[line width=3, white] (-2,2) -- (4, 2);
        \draw[thick, red,fill] (-2,2) circle (.1) -- (4, 2) circle (.1);
        
        \draw[line width=3, white] (-4,4) -- (0, 4);
        \draw[thick, red,fill] (-4,4) circle (.1) -- (0, 4) circle (.1);

        \draw[line width=3, white] (2,6) -- (4, 6);
        \draw[thick, red,fill] (2,6) circle (.1) -- (4, 6) circle (.1);
        
        \draw[line width=3, white] (2,8) -- (4, 8);
        \draw[thick, red,fill] (2,8) circle (.1) -- (4, 8) circle (.1);
        
        \node at (0,-2) {(a)};

        \foreach \x in {8,11}{\draw[very thick, -latex] (\x,2) -- (\x, 8);}
        \foreach \x in {14,17}{\draw[very thick, -latex] (\x,2) -- (\x, 8);}
        \foreach \x in {21,24}{\draw[very thick, -latex] (\x,2) -- (\x, 8);}
        
        \draw[fill,white] (7.5,3) rectangle (11.5,6.5);
        \draw[fill,white] (13.5,3) rectangle (17.5,6.5);
        \draw[fill,white] (20.5,4.25) rectangle (24.5,2.75);
        \draw[fill,white] (20.5,5.25) rectangle (24.5,6.75);
        
        \draw[thick] (7.5,3) rectangle (11.5,6.5);
        \draw[thick] (13.5,3) rectangle (17.5,6.5);
        \draw[thick] (20.5,4.25) rectangle (24.5,2.75);
        \draw[thick] (20.5,5.25) rectangle (24.5,6.75);
        
        \node at (9.5,4.75) {$C_1$};
        \node at (12.5,4.75) {$\circ$};
        \node at (15.5,4.75) {$C_2$};
        \node at (19,4.75) {$\coloneqq$};
        \node at (22.5,3.5) {$C_2$};
        \node at (22.5,6) {$C_1$};

        \node at (22.5,2.25) {$\cdots$};
        \node at (22.5,4.625) {$\cdots$};
        \node at (22.5,7) {$\cdots$};
        
        \node at (9.5,2.25) {$\cdots$};
        \node at (9.5,7) {$\cdots$};
        
        \node at (15.5,2.25) {$\cdots$};
        \node at (15.5,7) {$\cdots$};
        
        \node at (16,-2) {(b)};
    \end{tikzpicture}
    \caption{(a) an horizontal chord diagram on $5$ strands, and  (b) the vertical composition of two horizontal chord diagrams with the same number of strands.}
    \label{fig:horizontal chords}
\end{figure}

Given a $\star$-algebra of observables $\iO$, a \emph{quantum state} (or, simply, \emph{state}) is linear map~$\varphi :\iO \to \bC$ such that $\varphi(x\cdot x^\star) \geq 0$, for all $x\in \iO$, and $\varphi(1_{\iO})>0$.
A \emph{weight system} on horizontal chord diagrams is, by definition, a (complex) linear function from $\Apb$ to $\bC$.
Since horizontal chord diagrams can be interpreted as observables, it is natural to ask the following;
\begin{q*}[{\cite[Question~1.1]{corfield2021fundamental}}]
Which weight systems on horizontal chord diagrams are quantum states?
\end{q*}
This question, which finds its motivation theoretical physics, is also quite interesting from the mathematical viewpoint. In particular, we may restrict ourselves to the case of Lie algebra weight systems associated to a labelling of the Wilson lines by (finite-dimensional) irreducible representations.
These are used to define quantum link invariants from the Kontsevich integral. 

In \cite{corfield2021fundamental}~it was shown that the $\gln$-weight systems associated to the defining representation of $\gln$, which are generators of sorts for all weight systems on $\Apb$, are indeed quantum states. 
The proof of \cite[Theorem~1.2]{corfield2021fundamental} is conceptually simple and exploits a interesting subtle relation between these weight systems and certain distance kernels in Cayley graphs.
The aim of this note is to extend the main result of~\cite{corfield2021fundamental} to $\gln$-weight systems associated to more general sets of labellings.
Recall that an irreducible $\gln$-representation is identified by a Young diagram and a complex number. In this paper we are only concerned with the case when this complex number is $1$. We call these representations \emph{Young diagram representations}.

\begin{thm}\label{thm:main}
All $\gln$-weight systems associated to labelling by symmetric and/or exterior powers of the standard representation are quantum states. 
\end{thm}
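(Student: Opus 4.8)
The plan is to realise each symmetric or exterior power as the image of a self-adjoint idempotent inside a tensor power of the standard representation $V=\bC^{n}$, and to deduce the theorem from \cite[Theorem~1.2]{corfield2021fundamental} via the cabling homomorphism, which carries positivity on many strands down to positivity on fewer strands.

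First I would recall the permutation picture of the standard $\gln$-weight system underlying \cite{corfield2021fundamental}: the split Casimir of $\gln$ in the standard representation is the flip operator, so, using the un-normalised trace (the normalisation being immaterial for the quantum-state property), the standard weight system $W^{M}_{\mathrm{std}}$ on the algebra $\Apb_{M}$ of horizontal chord diagrams on $M$ strands factors as $\Apb_{M}\xrightarrow{\,s\,}\bC[\gS_{M}]\xrightarrow{\,\mathrm{tr}_{V^{\otimes M}}\,}\bC$, where $s$ sends a chord diagram to the product of the transpositions determined by its chords. The map $s$ is surjective (transpositions generate $\gS_{M}$) and intertwines $\star$ with $\sigma\mapsto\sigma^{-1}$ (reflecting the page reverses the chord order), which is exactly what is used in \cite{corfield2021fundamental}. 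Since the symmetriser $p^{+}_{k}$ and antisymmetriser $p^{-}_{k}$ are self-adjoint idempotents of $\bC[\gS_{k}]$ with images $\mathrm{Sym}^{k}V$ and $\Lambda^{k}V$, taking any $s$-preimage and averaging it with its $\star$-transform yields, for each $k$, a \emph{self-adjoint} element $a^{\pm}_{k}\in\Apb_{k}$ with $s(a^{\pm}_{k})=p^{\pm}_{k}$.

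Next I would bring in cabling. For $\vec{k}=(k_{1},\dots,k_{N})$ with $M=k_{1}+\cdots+k_{N}$, let $\Delta_{\vec{k}}\colon\Apb_{N}\to\Apb_{M}$ replace the $i$-th strand by $k_{i}$ parallel strands and each chord endpoint on the $i$-th strand by the sum of that chord over the $k_{i}$ sub-strands. A direct check of the infinitesimal pure braid relations shows that $\Delta_{\vec{k}}$ is a well-defined unital algebra homomorphism, and it visibly commutes with $\star$, so it is a homomorphism of $\star$-algebras. Because the $\gln$-action on $V^{\otimes k_{i}}$ is the iterated coproduct, the split Casimir between the $i$-th and $j$-th strand bundles is exactly the $\Delta_{\vec{k}}$-image of the chord joining strands $i$ and $j$; hence, for any labelling $\vec{W}=(W_{1},\dots,W_{N})$ with each $W_{i}\subseteq V^{\otimes k_{i}}$ a $\gln$-subrepresentation, the operator attached to $\Delta_{\vec{k}}(D)$ through $s$ preserves $W_{1}\otimes\cdots\otimes W_{N}\subseteq V^{\otimes M}$ and restricts there to the operator whose trace is $W_{\vec{W}}(D)$, the value at $D$ of the $\gln$-weight system determined by the labelling $\vec{W}$.

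Specialising to $W_{i}=\mathrm{Sym}^{k_{i}}V$ or $\Lambda^{k_{i}}V$ and setting $A:=a^{\pm}_{k_{1}}\otimes\cdots\otimes a^{\pm}_{k_{N}}\in\Apb_{M}$, a self-adjoint element whose $s$-image $E$ is the orthogonal projection of $V^{\otimes M}$ onto $W_{1}\otimes\cdots\otimes W_{N}$, cyclicity of the trace together with $E^{2}=E$ gives
\[
W_{\vec{W}}(D)=W^{M}_{\mathrm{std}}\bigl(A\cdot\Delta_{\vec{k}}(D)\cdot A\bigr),\qquad D\in\Apb_{N}.
\]
Then for $x\in\Apb_{N}$, putting $y:=A\,\Delta_{\vec{k}}(x)$ and using $A^{\star}=A$ and that $\Delta_{\vec{k}}$ is a $\star$-homomorphism, one has $A\,\Delta_{\vec{k}}(x\,x^{\star})\,A=y\,y^{\star}$, so $W_{\vec{W}}(x\,x^{\star})=W^{M}_{\mathrm{std}}(y\,y^{\star})\ge 0$ by \cite[Theorem~1.2]{corfield2021fundamental}; and $W_{\vec{W}}(1)=W^{M}_{\mathrm{std}}(A^{2})=\mathrm{tr}_{V^{\otimes M}}(E)=\dim(W_{1}\otimes\cdots\otimes W_{N})>0$, assuming as we may that the labelling representations are non-zero. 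Hence $W_{\vec{W}}$ is a quantum state. The main obstacle I anticipate is the bookkeeping behind the displayed identity: verifying that the cabled product of transpositions restricts on $W_{1}\otimes\cdots\otimes W_{N}$ to the composite of split Casimirs defining $W_{\vec{W}}$, and that $\Delta_{\vec{k}}$ respects both the defining relations of $\Apb$ and the $\star$-structure. Once these are settled, the positivity is a formal consequence of \cite{corfield2021fundamental}.
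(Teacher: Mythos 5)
Your proof is correct and follows essentially the same route as the paper: factor the weight system through the cabling map and the permutation homomorphism $\sigma$, and exploit the fact that the (anti)symmetrisers are \emph{self-adjoint} idempotents to rewrite $W_{\vec{W}}(xx^\star)$ as the standard weight system evaluated on a hermitian square, whose positivity is the cited result of Corfield--Sati--Schreiber. The only cosmetic difference is that the paper keeps the projector $c_{\underline{\rho}}$ inside $\bC[\gS_{M}]$ and absorbs it via a general lemma on class-like states and self-adjoint idempotents, whereas you lift it to a self-adjoint element of $\Apb_{M}$ and invoke cyclicity of the trace --- the same computation in different packaging.
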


It is well known that the symmetric powers of the standard representation are all the irreducible Young diagram representations of~$\mathfrak{gl}_2$. 

\begin{cor}
All $\mathfrak{gl}_2$-weight systems corresponding to irreducible Young diagram representations are quantum states.
\end{cor}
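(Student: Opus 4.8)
\section*{Towards a proof of Theorem~\ref{thm:main}}

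The plan is to deduce the theorem from the case of the defining representation, i.e.\ from \cite[Theorem~1.2]{corfield2021fundamental}, by a cabling argument. Write $V=\bC^{n}$ for the standard representation, denote by $\Apb(N)$ the algebra of horizontal chord diagrams on $N$ strands, and fix a labelling in which the $p$-th Wilson line carries $W_{p}=S^{a_{p}}(V)$ or $W_{p}=\Lambda^{a_{p}}(V)$; set $M=\sum_{p}a_{p}$. Recall that $S^{a}(V)$, resp.\ $\Lambda^{a}(V)$, is the image of the idempotent $\mathsf e^{a}_{+}=\tfrac1{a!}\sum_{\sigma\in S_{a}}\rho(\sigma)$, resp.\ $\mathsf e^{a}_{-}=\tfrac1{a!}\sum_{\sigma\in S_{a}}\operatorname{sgn}(\sigma)\rho(\sigma)$, on $V^{\otimes a}$, where $\rho$ is the permutation action. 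There is a \emph{cabling} homomorphism $\Delta\colon\Apb(N)\to\Apb(M)$ replacing the $p$-th strand by a bundle of $a_{p}$ strands and sending a chord between strands $p$ and $q$ to the sum $T_{pq}$ of all chords joining one strand of bundle $p$ to one strand of bundle $q$; it is standard that $\Delta$ respects the infinitesimal pure braid relations, is an algebra homomorphism, and commutes with $\star$. Composing with the defining-representation homomorphism $\rho_{V}\colon\Apb(M)\to\operatorname{End}(V^{\otimes M})$, which sends $t_{cd}$ to the transposition operator $P_{cd}$ and is a $\star$-homomorphism for the standard Hermitian form (as each $P_{cd}$ is self-adjoint and $\star$ reverses words), the $\gln$-weight system $w_{\mathbf W}$ attached to the labelling by the $W_{p}$ is
\[
 w_{\mathbf W}(x)=\operatorname{Tr}_{V^{\otimes M}}\!\bigl(\rho_{V}(\Delta x)\,E\bigr),\qquad E=\prod_{p}\mathsf e^{a_{p}}_{\pm},
\]
the product of the (anti)symmetrizers of the various bundles; indeed $\operatorname{Im}E=\bigotimes_{p}W_{p}$, and $E$ is an idempotent commuting with every $\rho_{V}(\Delta x)$ (see below), so the right-hand side is the trace of $\rho_{V}(\Delta x)$ restricted to $\bigotimes_{p}W_{p}$.

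Two elementary facts drive the argument. First, $E$ is \emph{self-adjoint} on $V^{\otimes M}$: each $\mathsf e^{a}_{\pm}$ is invariant under $\sigma\mapsto\sigma^{-1}$ (since $\sum_{\sigma}\sigma=\sum_{\sigma}\sigma^{-1}$ and $\operatorname{sgn}(\sigma)=\operatorname{sgn}(\sigma^{-1})$), hence fixed by the adjoint because $\rho(\sigma)^{\dagger}=\rho(\sigma^{-1})$; and the factors $\mathsf e^{a_{p}}_{\pm}$ act on disjoint tensor slots, so their product is again a self-adjoint idempotent. This is precisely where single rows and single columns are used: for a general Young diagram the relevant Young symmetrizer is not symmetric under $\sigma\mapsto\sigma^{-1}$, so the naive projector onto $\mathbb S_{\lambda}(V)$ is not self-adjoint and the reasoning below breaks down. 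Second, $E$ commutes with $\rho_{V}(\Delta x)$ for all $x$: conjugating $T_{pq}=\sum P_{cd}$ by a permutation of a single bundle merely re-indexes the sum, so $T_{pq}$ commutes with each $\mathsf e^{a_{p}}_{\pm}$, hence with $E$, and therefore so does any product of $T$'s.

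Granting these, the theorem follows from a soft lemma. Let $(\iB,\star)$ be a $\star$-algebra with a state $\varphi$, let $\iota\colon\Apb(N)\to\iB$ be a $\star$-homomorphism, and let $e\in\iB$ be a self-adjoint idempotent commuting with $\iota(\Apb(N))$ and with $\varphi(e)>0$. Then $x\mapsto\varphi\bigl(e\,\iota(x)\,e\bigr)$ is a state on $\Apb(N)$: writing $y=e\,\iota(x)\,e$ and using $e^{2}=e=e^{\star}$ together with centrality, one gets $\varphi\bigl(e\,\iota(xx^{\star})\,e\bigr)=\varphi\bigl(y\,y^{\star}\bigr)\ge 0$, while $\iota(1)=1$ gives $\varphi(e\cdot1\cdot e)=\varphi(e)>0$. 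Apply this with $\iB=\rho_{V}(\Apb(M))\subseteq\operatorname{End}(V^{\otimes M})$, a $\star$-subalgebra since it is spanned by the self-adjoint operators $\rho_{V}(t_{cd})$ and is closed under adjoints; with $\varphi=\operatorname{Tr}|_{\iB}$, which is a state (for the algebra $\Apb(M)$ itself this is the content of \cite[Theorem~1.2]{corfield2021fundamental}); with $\iota=\rho_{V}\circ\Delta$; and with $e=E$, whose trace $\prod_{p}\dim W_{p}>0$. This yields Theorem~\ref{thm:main}, and the corollary is then immediate since every irreducible Young diagram representation of $\mathfrak{gl}_{2}$ is a symmetric power of the standard one.

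The genuine content is the observation in the second paragraph: it is the self-adjointness of the (anti)symmetrizing idempotent, equivalently the invariance of its Young symmetrizer under $\sigma\mapsto\sigma^{-1}$, that makes the statement go through, and this is special to single-row and single-column shapes. The remaining points — that cabling is a well-defined $\star$-homomorphism, that $E$ is central for its image, and that $\operatorname{Tr}(\,\cdot\,E)$ on $V^{\otimes M}$ is exactly the weight system attached to the labelling by the $W_{p}$ — are routine but should be written out with some care.
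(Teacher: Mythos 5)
Your proof of the corollary itself is exactly the paper's: it follows at once from Theorem~\ref{thm:main} because every irreducible Young diagram representation of $\mathfrak{gl}_2$ is a symmetric power of the standard one. The bulk of your writeup is really a proof of Theorem~\ref{thm:main}, and there your route is parallel to the paper's but realized in a different algebra, and it is correct modulo the routine verifications you flag. The paper factors $W_{\underline{\rho}}$ through $\bC[\gS_{|\underline{\rho}|}]$ as $w_{\tt st}\circ(c_{\underline{\rho}}\cdot)\circ\sigma\circ\Delta$, takes as input that $w_{\tt st}$ is a state (the Cayley-distance eigenvalue computation of Corfield--Sati--Schreiber), and moves the idempotent around using the fact that $w_{\tt st}$ comes from a class function ($\phi(c\,aa^\star)=\phi(aa^\star c)$); you instead work in $\operatorname{End}(V^{\otimes M})$, where the state is the elementary $\operatorname{Tr}(AA^\dagger)\ge 0$ and the idempotent $E$ is moved around because it is genuinely central for the image of the cabled algebra. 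Both arguments hinge on the identical key point, which you correctly isolate: the (anti)symmetrizers are invariant under $\sigma\mapsto\sigma^{-1}$, hence self-adjoint, and this is special to single-row and single-column shapes (the paper's Remark~\ref{rem:expansion_symmetriser}). What your version buys is independence from the Cayley-distance input, since positivity of the Hilbert--Schmidt form replaces it; what the paper's version buys is that the entire argument stays combinatorial in the group algebra and isolates the reusable notion of a functional that is class-like with respect to a self-adjoint idempotent (Lemma~\ref{lem:class-like_and_idempotents}), without needing centrality of the symmetrizer.
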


The fundamental representations of~$\mathfrak{gl}_n$ are the given by exterior powers of the standard representation, which gives the next corollary.

\begin{cor}
All $\mathfrak{gl}_n$-weight systems corresponding to any labelling given by fundamental representations are quantum states.
\end{cor}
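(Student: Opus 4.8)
The statement follows at once from Theorem~\ref{thm:main}: by definition the fundamental representations of $\gln$ are the exterior powers of the standard representation $V=\bC^{n}$, so a labelling of the Wilson lines by fundamental representations is a labelling by exterior powers of the standard representation, and Theorem~\ref{thm:main} applies directly. The real content is Theorem~\ref{thm:main}, which I would prove by reducing to the Corfield--Sati--Schreiber result on the defining representation via a cabling argument; here is the plan.

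Write $V=\bC^{n}$. Each $\mathrm{Sym}^{k}V$ (resp.\ $\Lambda^{k}V$, with $k\leq n$) is cut out of $V^{\otimes k}$ by the symmetrizer $p_{+}$ (resp.\ the antisymmetrizer $p_{-}$), an orthogonal idempotent of $\mathrm{End}(V^{\otimes k})$ for the natural Hermitian structure. The first step is the representation-theoretic remark that, with respect to the trace form on $\gln$, the $\gln$-weight system, as an algebra homomorphism $W\colon\Apb^{(k)}\to\mathrm{End}(V^{\otimes k})$, sends a single chord joining two Wilson lines to the flip of the two corresponding tensor factors; hence $W(\Apb^{(k)})$ contains all transpositions, so all of $\bC[\gS_{k}]$, and in particular $p_{\pm}=W(P_{\pm})$ for some $P_{\pm}\in\Apb^{(k)}$ that can be taken self-adjoint, $P_{\pm}^{\star}=P_{\pm}$. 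The second step is to set up the cabling homomorphism $\Delta\colon\Apb^{(N)}\to\Apb^{(M)}$, $M=\sum_{i}k_{i}$, which replaces the $i$-th strand by $k_{i}$ parallel strands and spreads each chord endpoint on that strand over its $k_{i}$ copies, and to check that $\Delta$ is a well-defined homomorphism of $\star$-algebras (compatibility with the infinitesimal pure braid relations is the standard cabling fact). Writing $\mathbf{P}=P_{1}\otimes\cdots\otimes P_{N}$ for the juxtaposition of the relevant (anti)symmetrizer elements on the successive blocks of strands, and using cyclicity of the trace together with the fact that each $p_{i}$ centralizes $W(\Delta(D))$ (cabling makes the $i$-th block carry the diagonal $\gln$-action, which the (anti)symmetrizer commutes with), one obtains the identity
\[
\varphi_{V_{1},\dots,V_{N}}(D)=\varphi_{V,\dots,V}\bigl(\mathbf{P}\cdot\Delta(D)\bigr),
\]
where $\varphi_{V_{1},\dots,V_{N}}$ is the $\gln$-weight system for the chosen labelling $V_{i}\in\{\mathrm{Sym}^{k_{i}}V,\Lambda^{k_{i}}V\}$ and $\varphi_{V,\dots,V}$ is the defining-representation weight system of \cite{corfield2021fundamental}. (This is a weight-system version of the classical cabling formula $W_{V_{1},\dots,V_{N}}(D)=(\pi_{1}\otimes\cdots\otimes\pi_{N})\,W_{V,\dots,V}(\Delta D)\,(\iota_{1}\otimes\cdots\otimes\iota_{N})$ combined with cyclicity of the trace, $p_{i}=\iota_{i}\pi_{i}$.)

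Granting the displayed identity, positivity is formal. Since $\Delta$ is a $\star$-homomorphism we have $\Delta(D^{\star})=\Delta(D)^{\star}$, and $\mathbf{P}=\mathbf{P}^{\star}$ commutes (inside the image) with every $\Delta(D)$; using this and cyclicity of the trace, for $E:=\mathbf{P}\cdot\Delta(D)\in\Apb^{(M)}$ one gets
\[
\varphi_{V_{1},\dots,V_{N}}(D\cdot D^{\star})=\varphi_{V,\dots,V}\bigl(\mathbf{P}\cdot\Delta(D\cdot D^{\star})\bigr)=\varphi_{V,\dots,V}(E\cdot E^{\star})\ \geq\ 0
\]
by \cite[Theorem~1.2]{corfield2021fundamental}, while $\varphi_{V_{1},\dots,V_{N}}(1)=\prod_{i}\dim V_{i}>0$; the mixed case ``symmetric and/or exterior powers'' is handled by choosing $P_{i}$ equal to $P_{+}$ or $P_{-}$ block by block. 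I expect the main obstacle to be the second paragraph: verifying carefully that the (anti)symmetrizer is the image of a genuine self-adjoint element of $\Apb^{(k)}$, that $\Delta$ really is a $\star$-homomorphism descending past the infinitesimal pure braid relations, and that the normalization of the Corfield--Sati--Schreiber functional (the trace of the associated endomorphism, as dictated by their link with distance kernels on Cayley graphs of symmetric groups) makes $\varphi(1)$ strictly positive --- for exterior powers this last point quietly uses $k_{i}\leq n$, which is automatic since $\Lambda^{k}V=0$ otherwise.
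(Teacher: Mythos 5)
Your deduction of the corollary is exactly the paper's: the fundamental representations of $\gln$ are the exterior powers $\Lambda^{k}\bC^{n}$ of the standard representation, so the statement is an immediate special case of Theorem~\ref{thm:main}. The appended plan for Theorem~\ref{thm:main} itself is also essentially the paper's argument, transported from $\bC[\gS_{\vert\underline{\rho}\vert}]$ (tensor splitting, a self-adjoint idempotent symmetriser, and the class-like property of $w_{\tt st}$) to the endomorphism algebra (cabling, the projectors $p_{\pm}$, and cyclicity of the trace), so no further comment is needed.
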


The paper is organised as follows: in Section~2 we recall some basic definitions and properties of $\star$-algebras, horizontal chord diagrams, representations of $\gln$, and Young symmetrisers. 
In Section~3 we prove our main result. 

\subsection*{Acknowledgements} The author thanks Luigi Caputi, Roberto Castorrini, Sabino Di Trani, Hisham Sati, and Urs Schreiber for the helpful comments and the interesting conversations. This project started when the author was a post-doc at NYU Abu Dhabi.
During the writing of this paper the author partially supported by the MIUR-PRIN project 2017JZ2SW5.
\section{Background material}

In this section we collect some background material concerning $\star$-algebras, representation theory and Lie algebra weight systems.

\subsection{The $\star$-algebra of horizontal chord diagrams}
We start with a definition.

\begin{defn}\label{def:star-alg}
Given a commutative ring $C$ endowed with a ring involution $\overline{\cdot}:C\to C$. 
A \emph{$\star$-algebra}, or \emph{involutive algebra}, over $C$ is a unital associative $C$-algebra $\iO$ together with an involution $\star:\iO \to \iO$, such that: 
\begin{enumerate}[label = (A\arabic*)]
    \item $(1_{\iO})^\star = 1_{\iO}$;
    \item $(z\cdot a+ w\cdot b)^\star = \overline{z} \cdot a^\star + \overline{w}\cdot b^\star$, for all $z,w\in C$ and $a,b\in \iO$;
    \item $(ab)^\star = b^\star a^\star$, for all $a,b\in \iO$.
\end{enumerate}
A \emph{morphism of $\star$-algebras} is a morphism of algebras which commutes with the involution.
\end{defn}

A first class of examples of $\star$-algebras is given by group rings.

\begin{ex}\label{ex:group-star-alg}
Given a group $G$, the group ring $\bC[G]$ has a natural structure of $\star$-algebra given by setting
\[ \left(\sum_{i=1}^{k} z_i \cdot g_i\right)^\star = \sum_{i=1}^{k}\overline{z}_i\cdot g_i^{-1}, \]
for all $z_1,...,z_k\in \bC$ and $g_1,...,g_k\in G$.
\end{ex}

In the next example we present a formal definition of the $\star$-algebra of horizontal chord diagrams $\Apb$. While the reader can keep in mind the pictorial definition given in the introduction, it is useful to have an explicit definition. 

\begin{ex}\label{ex:hcdalg}
An \emph{horizontal chord diagram on $N$ strands} is an element of the free mo\-no\-id~$(\Dpb_N, \circ)$ generated by the pairs $(i,j)$, with $1\leq i < j \leq N$, called \emph{chords}, with neutral element the \emph{chord-less diagram} $\uparrow_N$.
We also consider the \emph{empty chord diagram} $\uparrow_0 = \emptyset$, defined as the diagram with neither chords nor strands, and set $\Dpb_0 = \{ \uparrow_0 \}$.
This monoid has a natural involution $^\star$ given by ``reading the chord diagram backwards'', that is:
\[ [(i_1,j_1)\circ \cdots \circ (i_r,j_r)]^\star = (i_r,j_r)\circ \cdots \circ (i_1,j_1). \]
The \emph{algebra of horizontal chord diagrams on $N$ strands} is given by the quotient algebra
\[ \Apb_{N} \coloneqq \frac{\bC[\Dpb_N]}{I}, \]
where $I\subseteq \bC[\Dpb_N]$ is the ideal generated by elements of the form
\begin{equation}
\tag{\rm 2T}\label{eq:2T}
(i,j)\circ (k,l) - (k,l)\circ (i,j),\qquad 1\leq i < j < k < l \leq N,
\end{equation}
and of the form
\begin{equation}
\tag{\rm 4T}\label{eq:4T}
(i,j)\circ (i,k) + (i,j)\circ (j,k)  - (i,k)\circ (i,j) - (j,k)\circ (i,j),\quad 1\leq i < j < k \leq N,
\end{equation}
see Figure~\ref{fig:horizontal chords relations} for a pictorial representation.
Finally, define the \emph{algebra of horizontal chord diagrams} as $\Apb = \bigoplus_{N\geq 0} \Apb_N$.
Extending $^\star$ by anti-linearity, since $I^\star = I$, the algebra of horizontal chord diagrams acquires the structure of complex $\star$-algebra.~
This involution has actually a deeper and more abstract interpretation as the antipode of an Hopf algebra associated to the homology of some loop spaces -- cf.~\cite[Section~4]{corfield2021fundamental}.
\end{ex}

For small values of $N$ we can explicitly identify $\Apb_N$; we have the following isomorphisms of $\star$-algebras: $\Apb_1  \cong \Apb_0 \cong \bC$, and $\Apb_2 \cong \bC[x]$, where the involution on the latter algebra is defined by setting $x^\star = x$. For $N\geq 3$ the algebra $\Apb_N$ is non commutative. In fact, we have a natural embedding $\Apb_{N-1}\subseteq \Apb_{N}$ and $\Apb_3$ is non-commutative; it is a direct product of the free algebra on two generators $\bC\langle x,y\rangle$  and $\bC[u]$, see~\cite[Proposition~5.11.1]{ChVassiliev}.

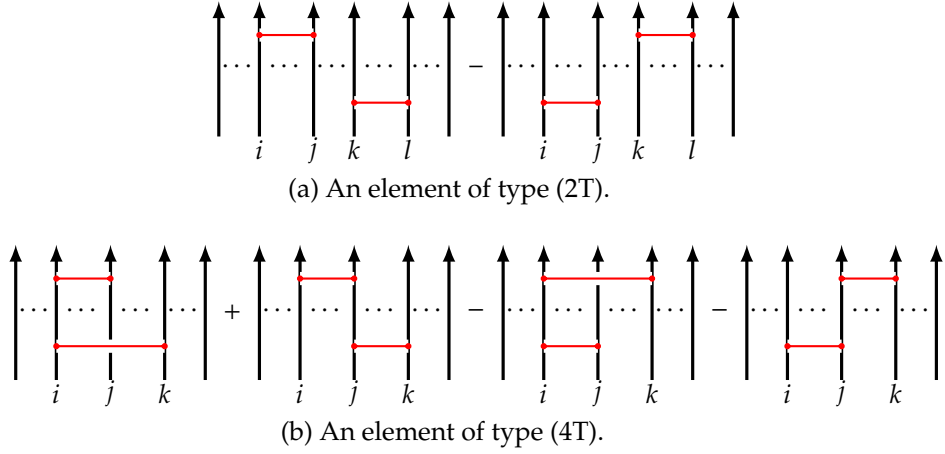
\begin{figure}[]
    \centering
    \begin{tikzpicture}[scale =.18]
    
    \begin{scope}[shift ={+(21,0)}]
      \foreach \x in {-9, -6,-2,1,5, 8}{\draw[very thick, -latex] (\x,0) -- (\x, 10);}
        
        \node at (-7.5,5) {$\cdots$};
        \node at (-4,5) {$\cdots$};
        \node at (-.5,5) {$\cdots$};
        \node at (6.5,5) {$\cdots$};
        \node at (3,5) {$\cdots$};

        \node at (-6,-1) {$i$};
        \node at (-2,-1) {$j$};
        
        \node at (1,-1) {$k$};
        \node at (5,-1) {$l$};
        
        \draw[line width=5, white] (1,7.5) -- (5, 7.5);
        \draw[thick, red,fill] (1,7.5) circle (.15) -- (5, 7.5) circle (.15);

        \draw[line width=5, white] (-2,2.5) -- (-6, 2.5);
        \draw[thick, red,fill] (-2,2.5) circle (.15) -- (-6, 2.5) circle (.15);
        \end{scope}
        \node at (10,5) {--};

        \foreach \x in {-9, -6,-2,1,5, 8}{\draw[very thick, -latex] (\x,0) -- (\x, 10);}
        
        \node at (-7.5,5) {$\cdots$};
        \node at (-4,5) {$\cdots$};
        \node at (-.5,5) {$\cdots$};
        \node at (6.5,5) {$\cdots$};
        \node at (3,5) {$\cdots$};

        \node at (-6,-1) {$i$};
        \node at (-2,-1) {$j$};
        
        \node at (1,-1) {$k$};
        \node at (5,-1) {$l$};
        
        \draw[line width=5, white] (1,2.5) -- (5, 2.5);
        \draw[thick, red,fill] (1,2.5) circle (.15) -- (5, 2.5) circle (.15);

        \draw[line width=5, white] (-2,7.5) -- (-6, 7.5);
        \draw[thick, red,fill] (-2,7.5) circle (.15) -- (-6, 7.5) circle (.15);
        
        \node at (8,-4) {(a) An element of type (2T).};
        \begin{scope}[shift = {+(-13,-18)}]
         \foreach \x in {-11, -8, -4,0,3}{\draw[very thick, -latex] (\x,0) -- (\x, 10);}
        
        \node at (-9.5,5) {$\cdots$};
        \node at (-6,5) {$\cdots$};
        \node at (-2,5) {$\cdots$};
        \node at (1.5,5) {$\cdots$};
        
        \draw[line width=5, white] (-4,7.5) -- (-8, 7.5);
        \draw[thick, red,fill] (-4,7.5) circle (.15) -- (-8, 7.5) circle (.15);

        \draw[line width=5, white] (0,2.5) -- (-8, 2.5);
        \draw[thick, red,fill] (0,2.5) circle (.15) -- (-8, 2.5) circle (.15);

        \node at (-8,-1) {$i$};
        \node at (-4,-1) {$j$};
        
        \node at (0,-1) {$k$};

        \begin{scope}[shift ={+(18,0)}]
       \foreach \x in {-11, -8, -4,0,3}{\draw[very thick, -latex] (\x,0) -- (\x, 10);}
        
        \node at (-9.5,5) {$\cdots$};
        \node at (-6,5) {$\cdots$};
        \node at (-2,5) {$\cdots$};
        \node at (1.5,5) {$\cdots$};
        
        \draw[line width=5, white] (-4,7.5) -- (-8, 7.5);
        \draw[thick, red,fill] (-4,7.5) circle (.15) -- (-8, 7.5) circle (.15);

        \draw[line width=5, white] (0,2.5) -- (-4, 2.5);
        \draw[thick, red,fill] (0,2.5) circle (.15) -- (-4, 2.5) circle (.15);

        \node at (-8,-1) {$i$};
        \node at (-4,-1) {$j$};
        
        \node at (0,-1) {$k$};
        \end{scope}

        \begin{scope}[shift = {+(36,0)}]
         \foreach \x in {-11, -8, -4,0,3}{\draw[very thick, -latex] (\x,0) -- (\x, 10);}
        
        \node at (-9.5,5) {$\cdots$};
        \node at (-6,5) {$\cdots$};
        \node at (-2,5) {$\cdots$};
        \node at (1.5,5) {$\cdots$};
        
        \draw[line width=5, white] (-4,2.5) -- (-8, 2.5);
        \draw[thick, red,fill] (-4,2.5) circle (.15) -- (-8, 2.5) circle (.15);

        \draw[line width=5, white] (0,7.5) -- (-8, 7.5);
        \draw[thick, red,fill] (0,7.5) circle (.15) -- (-8, 7.5) circle (.15);

        \node at (-8,-1) {$i$};
        \node at (-4,-1) {$j$};
        
        \node at (0,-1) {$k$};
        \end{scope}
        
        \begin{scope}[shift ={+(54,0)}]
       \foreach \x in {-11, -8, -4,0,3}{\draw[very thick, -latex] (\x,0) -- (\x, 10);}
        
        \node at (-9.5,5) {$\cdots$};
        \node at (-6,5) {$\cdots$};
        \node at (-2,5) {$\cdots$};
        \node at (1.5,5) {$\cdots$};
        
        \draw[line width=5, white] (-4,2.5) -- (-8, 2.5);
        \draw[thick, red,fill] (-4,2.5) circle (.15) -- (-8, 2.5) circle (.15);

        \draw[line width=5, white] (0,7.5) -- (-4, 7.5);
        \draw[thick, red,fill] (0,7.5) circle (.15) -- (-4, 7.5) circle (.15);

        \node at (-8,-1) {$i$};
        \node at (-4,-1) {$j$};
        
        \node at (0,-1) {$k$};
        \end{scope}

        \node at (5,5) {$+$};
        \node at (23,5) {$-$};
        \node at (41,5) {$-$};
        \node at (20.5,-4) {(b) An element of type (4T).};
        \end{scope}
    \end{tikzpicture}
    \caption{The pictorial representation of the generators of $I\subset \Apb$.}
    \label{fig:horizontal chords relations}
\end{figure}

\subsection{Horizontal chord diagrams, permutations, and standard weight systems}\label{sec:hcd}

To each horizontal chord diagram $C = (i_r,j_r) \circ \cdots \circ (i_1,j_1) \in \Apb_N$ there is a naturally associated permutation:
\begin{equation}\label{def:sigma}
\sigma(C) = \tau_{i_r,j_r} \circ \cdots \circ   \tau_{i_1,j_1} \in \gS_N,    
\end{equation}
where $\tau_{i,j}$ denotes the transposition of $i$ and $j$.
Recall, from Example~\ref{ex:group-star-alg}, that the group ring $\bC[\gS_N]$ has a natural structure of $\star$-algebra.
Indeed, we have the following;

\begin{prop}\label{prop:sigma}
The map $\sigma: \Apb_N \to \bC[\gS_N]: C\mapsto \sigma(C)$ is a morphism of $\star$-algebras.
\end{prop}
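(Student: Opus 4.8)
The plan is to construct $\sigma$ in stages: first as a homomorphism out of the free monoid (hence out of its group ring), then check that it annihilates the defining ideal, and finally check that it intertwines the two involutions.

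First I would invoke the universal property of the free monoid: the assignment $(i,j)\mapsto \tau_{i,j}$, together with $\uparrow_N\mapsto \id$, extends uniquely to a monoid homomorphism $\Dpb_N\to\gS_N$, which is precisely the map $C\mapsto\sigma(C)$ of \eqref{def:sigma}. Extending $\bC$-linearly then gives a unital $\bC$-algebra homomorphism $\widetilde{\sigma}\colon \bC[\Dpb_N]\to\bC[\gS_N]$.

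Next I would show that $I\subseteq\ker\widetilde{\sigma}$ by verifying that each family of generators of $I$ is sent to $0$. A generator of type \eqref{eq:2T} with $i<j<k<l$ is sent to $\tau_{i,j}\tau_{k,l}-\tau_{k,l}\tau_{i,j}=0$, since transpositions with disjoint supports commute. A generator of type \eqref{eq:4T} with $i<j<k$ is sent to
\[ \tau_{i,j}\tau_{i,k}+\tau_{i,j}\tau_{j,k}-\tau_{i,k}\tau_{i,j}-\tau_{j,k}\tau_{i,j}, \]
and a short computation inside the copy of $\gS_3$ permuting $\{i,j,k\}$ shows that these four products are exactly the two $3$-cycles on $\{i,j,k\}$, each occurring once with a plus sign and once with a minus sign: concretely $\tau_{i,j}\tau_{i,k}=\tau_{j,k}\tau_{i,j}$ and $\tau_{i,j}\tau_{j,k}=\tau_{i,k}\tau_{i,j}$, so the expression vanishes. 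Hence $\widetilde{\sigma}$ factors through the quotient, yielding an algebra homomorphism $\sigma\colon \Apb_N=\bC[\Dpb_N]/I \to \bC[\gS_N]$.

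Finally I would check compatibility with $\star$. On a monomial, $[(i_1,j_1)\circ\cdots\circ(i_r,j_r)]^\star=(i_r,j_r)\circ\cdots\circ(i_1,j_1)$ is sent by $\sigma$ to $\tau_{i_r,j_r}\cdots\tau_{i_1,j_1}$, while $\bigl(\sigma((i_1,j_1)\circ\cdots\circ(i_r,j_r))\bigr)^\star=(\tau_{i_1,j_1}\cdots\tau_{i_r,j_r})^{-1}=\tau_{i_r,j_r}^{-1}\cdots\tau_{i_1,j_1}^{-1}=\tau_{i_r,j_r}\cdots\tau_{i_1,j_1}$, using that every transposition is its own inverse; the two agree. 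Since both involutions are anti-linear over $\bC$, agreement on monomials gives $\sigma(x^\star)=\sigma(x)^\star$ for every $x\in\Apb_N$, so $\sigma$ is a morphism of $\star$-algebras. The only genuine computation here is the $4$-term identity, which is routine and constitutes the sole (minor) obstacle.
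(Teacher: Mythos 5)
Your proof is correct and follows essentially the same route as the paper's: extend $(i,j)\mapsto\tau_{i,j}$ multiplicatively, kill the \eqref{eq:2T} generators by disjointness of supports, verify the \eqref{eq:4T} generators via the same pairwise identities of products of transpositions, and check $\star$-compatibility. You are only slightly more explicit than the paper in invoking the universal property of the free monoid and in spelling out the $\star$-check on monomials (using $\tau_{i,j}^{-1}=\tau_{i,j}$), which the paper simply asserts.
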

 
\begin{proof}
We have to prove that $\sigma$ is a well-defined ring homomorphism. That is we have to show that the images of elements in  Equations~\eqref{eq:2T} and~\eqref{eq:4T} are trivial. 
This is immediate for the elements in Equation~\eqref{eq:2T}.
On the other hand, a direct computation shows that
\[ \sigma((i,j)\circ (i,k)) = \sigma((j,k)\circ (i,j)) = ( j\ k \ i)\]
and that
\[\sigma((i,j)\circ (j,k) ) = \sigma( (i,k)\circ (i,j)) = ( i\ k \ j).\]
Since $\sigma$ commutes with~$\star$, this concludes the proof.
\end{proof}

Following \cite[Section~2.2]{BarNat96}, the  $\gln$ weight system associated to the defining representation, here called \emph{standard $\gln$ weight system}, is the map~$ W_{\tt st} : \Apb \to \mathbb{C} $ which assigns to each chord diagram $C\in \Apb_N$~the integer
\begin{equation}\label{eq:Wst}  W_{\tt st}(C) = w_{\tt st} (\sigma(C)) \coloneqq n^{\text{number of cycles in }\sigma(C)}\ ,\end{equation}
extended by $\mathbb{C}$-linearity. We remark that here the number of cycles in a permutation also includes the trivial cycles (i.e.~the fixed points).

\subsection{Tensor splitting}

Another important operation on $\Apb$ is what we call tensor splitting.
Intuitively, the $\underline{i}$-tensor splitting, with $\underline{i} = (i_1 ,..., i_N)\in \bN^N$, is obtained by replacing the $r$-th strand with $i_r$ parallel strands, and replacing each chord with the sum of all possible ``liftings'' of said chord in the new horizontal chord diagram. More formally, we have the following definition.

\begin{defn}[{\cite[Definition~2.2]{BarNat96}}]\label{def:tensorsplit}
Let $\underline{i} = (i_1 ,..., i_N)$ be a $N$-tuple of positive integers.
The \emph{$\underline{i}$-tensor splitting} is the ring homomorphism
\[ \Delta^{\underline{i}}:\: \Apb_{N} \to \Apb_{\ell_{N+1}(\underline{i})}:\: (j,k) \mapsto \sum_{r=\ell_{j}(\underline{i}) + 1}^{\ell_{j+1}(\underline{i}) }\sum_{s=\ell_{k}(\underline{i}) + 1}^{\ell_{k+1}(\underline{i}) } (r,s)\]
where $\ell_{1}(\underline{i}) = 0$ and $\ell_{r}(\underline{i} ) = i_1 + \cdots +i_{r-1}$, for $0 < r \leq N +1$.
\end{defn}

Let us see in an example the effect of the tensor splitting.
\begin{ex}
Denote $C  = (2,3) \circ(1,3)\in \Apb_3$, we explicitly compute some tensor splittings in this case; for instance, we have that
\[ \Delta^{(1,2,1)}(C) = (2,4) \circ(1,4) + (3,4) \circ(1,4)\in \Apb_4, \]
and also that
\[ \Delta^{(3,1,1)}(C) = (4,5) \circ(1,5) + (4,5) \circ(2,5) + (4,5) \circ(3,5)\in \Apb_5. \]
\end{ex}

Note that $\Delta^{\underline{1}}$, where $\underline{1} = (1,...,1)$, is just the identity map. We conclude this subsection with the following, useful, observation;

\begin{rem}\label{rem:tensor split is morphism}
The tensor splitting $\Delta^{\underline{i}}:\Apb_N\to\Apb_{\ell_{N+1}(\underline{i})}$ is a morphism of $\star$-algebras.
\end{rem}

\subsection{Young symmetrisers}

A \emph{partition of $n\in \mathbb{N}$} is a (non-empty) ordered collection of positive integers $\lambda = (n_1,..., n_k)$, with $n_1 \geq n_2 \geq \dots \geq n_k > 0$, which add up to $n$. If $\lambda$ is a partition of $n$ we shall write $\lambda \vdash n$ or, if it is understood that $\lambda$ is a partition, $|\lambda| = n$.
The (positive) integers $n_1,..., n_k$ are called \emph{parts}, and the number of parts, which is  $k$ in the case at hand, is called the \emph{length} of the partition.

The \emph{Young diagram} associated to a partition  $\lambda = (n_1,..., n_k)$ is a finite collection of boxes arranged in $k$ (left-justified) rows of length (from top to bottom) $n_1,..., n_k$ respectively.
Conversely, to each Young diagram we can associate a partition whose parts are the length of its rows.
We denote both a partition and its associated Young diagram by the same symbol.

\begin{ex}
The partition $\lambda = (5,3,1,1)$ is a partition of\ $10$ whose length is $4$. The corresponding Young diagram $\lambda$ is
\[\ytableausetup{mathmode, centertableaux}
\begin{ytableau}
\phantom{1}  & \phantom{2} & \phantom{3} & \phantom{4}& \phantom{5}\\ 
\phantom{1} & \phantom{2} & \phantom{3} \\
\phantom{1}  \\
\phantom{1}\\
\end{ytableau}\]
\end{ex}

An essential tool in building irreducible representations of $\gsl_n$, and thus of $\gln$, are Young symmetrisers  -- cf.~\cite[Pages 45-46]{FulHar91}.

\begin{defn}
Given $\lambda \vdash n$, a \emph{Young tableau of shape $\lambda$} is a filling of the Young diagram~$\lambda$ with the numbers $1,...,n$.  
A tableau is \emph{standard} if the number (\emph{label}) on each box is strictly increasing along both rows and columns, \emph{and there are no repeated labels}. 
\end{defn}

Given a Young diagram of shape $\lambda = (n_1, n_2,\ldots, n_k)$, there is a \emph{canonical} (\emph{standard}) \emph{Young tableau} associated to it, which is the following tableau;
\[\ytableausetup{mathmode, centertableaux,boxsize=2.5em}
\begin{ytableau}
n &  n\text{-}1  & \none[\cdots] & \scriptstyle n\text{-}n_1 & \none[\cdots] & \scriptstyle n\text{-}n_1\text{+}1\\ 
n\text{-}n_1 & \scriptstyle n\text{-}n_1\text{-}2 &   \none[\cdots] &  \begin{array}{c}\scriptstyle{n\text{-}n_1} \\ \scriptstyle{\text{-}n_2 \text{+1}}\\\end{array}\\
\none[\vdots] & \none[] &  \none[\vdots] \\
n_k& \none[\cdots] & 1\\
\end{ytableau}\]
To get hold of how the canonical tableau is defined, we give a concrete example.

\begin{ex}
The canonical Young tableau of shape $\lambda = (5,3,1,1)$ is the following
\[\ytableausetup{mathmode, centertableaux, boxsize=1.75em}
\begin{ytableau}
10 & 9 & 8 & 7 & 6\\ 
5 & 4 & 3 \\
2 \\
1\\
\end{ytableau}\]
\end{ex}

Let $T^\lambda$ be the set of all tableaux of shape $\lambda=(n_1,...,n_k)$. 
A permutation~$\sigma\in \gS_{|\lambda|}$ acts on each tableau~$t^\lambda\in T^\lambda$ by permuting the labels, denote by $\sigma.t^\lambda$ the resulting tableau. 
Fixed a tableau $t^\lambda$ we can define two sub-groups of $\gS_{|\lambda|}$; that is, the \emph{row stabiliser}
\[  \mathfrak{R}_{t^{\lambda}} = \left\{\  \sigma \in \gS_{|\lambda|}\ \left\vert\  \begin{array}{c}\text{the set of labels on corresponding} \\ \text{rows of }  \sigma.t^\lambda \text{ and } t^\lambda  \text{ are the same}\end{array} \right.\right\},\]
and the \emph{column stabiliser}
\[  \mathfrak{C}_{t^{\lambda}} =  \left\{\  \sigma \in \gS_{|\lambda|}\ \left\vert\  \begin{array}{c}\text{the set of labels on corresponding} \\ \text{columns of }  \sigma.t^\lambda \text{ and } t^\lambda  \text{ are the same}\end{array} \right.\right\}.\]
Using these sub-groups we can associate to each tableau its Young symmetriser.

\begin{defn}
The \emph{unnormalised Young symmetriser} $\tilde{c}_{t^{\lambda}}\in \mathbb{C}[\gS_{|\lambda|}]$ associated to the tableau $t^\lambda$ is defined as $\tilde{c}_{t^{\lambda}} :=  a_{t^{\lambda}} b_{t^{\lambda}}$, where
\[ a_{t^{\lambda}} = \sum_{\sigma\in \mathfrak{R}_{t^{\lambda}}} \sigma\qquad \qquad \qquad b_{t^{\lambda}} = \sum_{\sigma \in \mathfrak{C}_{t^{\lambda}}} \rm{sign}(\sigma)\sigma.\]
The \emph{Young symmetriser}  $ c_{t^{\lambda}}\in \mathbb{C}[\gS_{|\lambda|}]$ is a re-scaling of $\tilde{c}_{t^{\lambda}}$ by a positive rational number, in such a way that ${c}_{t^{\lambda}} {c}_{t^{\lambda}} = {c}_{t^{\lambda}}$ -- cf.~\cite[Lemma~4.6]{FulHar91}.
\end{defn}

\begin{nt}
The Young symmetriser associated to a partition $\lambda$, and not to a tableau $t^{\lambda}$, is the symmetriser associated to the canonical tableau, and we write $c_\lambda$ \emph{en lieu} of~$c_{t^{\lambda}}$.
\end{nt}

\begin{rem}\label{rem:expansion_symmetriser}
Note that if $\sigma$ appears in $a_\lambda$ (or $b_\lambda$) then also $\sigma^{-1}$ does, and they appear with the same (real) coefficients. 
The same does not hold for $c_\lambda$ (see the example below).
Nonetheless, if $\lambda$ is either of the form $(n)$ or $(1,...,1)$, then  $c_\lambda = c_\lambda^\star$ as elements of $\bC[\gS_{\vert \lambda \vert}]$.
\end{rem}

\begin{ex}\label{ex:symmetrisers}
Consider the partition $\lambda\vdash 3$ given by $(2,1)$, the corresponding Young diagram $\lambda$ is the following
\[\ytableausetup{mathmode, centertableaux}
\begin{ytableau}
\phantom{1}  & \phantom{2}\\ 
\phantom{2} \\
\end{ytableau}\]
Two examples of Young tableau of shape $\lambda$ are
\[t^{\lambda} = \ytableausetup{mathmode, centertableaux}
\begin{ytableau}
1  & 2 \\ 
3\\
\end{ytableau}\qquad\qquad s^\lambda( = (1,3).t^\lambda) = \ytableausetup{mathmode, centertableaux}
\begin{ytableau}
3  & 2 \\ 
1\\
\end{ytableau}\]
Observe that $s^\lambda$ is standard (actually, it is the canonical tableau), whereas $t^\lambda$ is not.
In these cases we have:
\[ \mathfrak{R}_{t^{\lambda}} = \{ {\rm id}, (1,2) \},\qquad \mathfrak{C}_{t^{\lambda}} = \{ {\rm id}, (1,3) \}, \]
and
\[ \mathfrak{R}_{s^\lambda} = \{ {\rm id}, (2,3) \},\qquad \mathfrak{C}_{s^\lambda} = \{ {\rm id}, (1,3) \}, \]
respectively. It follows that:
\[ \tilde{c}_{t^{\lambda}} = ({\rm id} + (1,2))({\rm id} - (1,3)) ={\rm id} + (1,2) - (1,3) - (1,3,2),\] 
and
\[ \tilde{c}_{s^\lambda} = ({\rm id} + (2,3))({\rm id} - (1,3)) = {\rm id} + (1,2) - (1,3) - (1,2,3).\]
\end{ex}

The next subsection is dedicated to the description of irreducible $\gln$-rep\-re\-sen\-ta\-tions in terms of partitions. Our main scope there is to set some notation, for the general theory the reader may refer to \cite[Chapter 15]{FulHar91}.

\subsection{Representations of $\gln$}

Denote by $\rho_{\tt st}$ the defining representation of~$\gln$, that is the natural action of $\gln = {\rm End}(\bC^n)$ on $\bC^n$, and denote by $\rho^{\gsl}_{\tt st}$ the defining representation of $\gsl_n$ -- which is the restriction of $\rho_{\tt st}$ to $\gsl_n$.

It well-know that (finite-dimensional) irreducible representation of $\gsl_n$ are associated to a partition of length at most $n$ \cite[\S 15.3]{FulHar91}. The representation corresponding to a partition $\lambda$ can be described explicitly.
Consider the natural action (by permutation of the tensor factors) of the symmetric group $\gS_k$ on the tensor product of~$k$ copies of $\bC^n$. This action commutes with the action of $\gsl_n$, given by $(\rho_{\tt st}^{\gsl})^{\otimes k}$. In particular, we can consider the images of the action of a Young symmetriser~$c_{t^{\lambda}}\in \bC[\gS_k]$ (with $|\lambda| = k$) in  $\bC^{n} \otimes \dots \otimes \bC^{n}$.

\begin{defn}
Let $\lambda\vdash k$, and $n\geq k$. The image of the action of $c_\lambda$ on $(\bC^{n})^{\otimes k}$  is called the \emph{Weyl module} corresponding to $\lambda$, and is denoted by $\bS_\lambda(\bC^n)$.
\end{defn}

The restriction of $(\rho_{\tt st}^{\gsl})^{\otimes k}$ to the Weyl module $\bS_\lambda(\bC^n)$ associated to $\lambda\vdash k$ is isomorphic to the irreducible representation of $\gsl_n$ with highest weight $w_{\lambda}$, denoted by $\rho^{\gsl}_\lambda$.
We can extend $\rho^{\gsl}_\lambda$ to a representation of $\gln =\gsl_n\oplus \mathbb{C}\langle {\rm Id}_{\bC} \rangle$ by making~$ {\rm I}_n$ act trivially on $V_\lambda$. For $z\in\mathbb{C}$, consider the representation $\rho_{z}:\gln \to {\rm End}(\bC)$ defined by
\[ \rho_{z}(g + k {\rm Id}_{\bC})[x]  = zk x,\quad \forall g\in \gsl_n,\ k, x\in\bC\, . \]
Each irreducible representation of $\gsl_n\oplus \mathbb{C}\langle  {\rm Id}_{\bC} \rangle = \ggl_n$ is isomorphic to a tensor product of the form~$\rho_\lambda \otimes \rho_{z}$ -- see \cite[\S~15.5]{FulHar91} -- where the tensor product of two representations, say $\rho$ and $\rho'$, is given by $$(\rho \otimes \rho')(g)[v_1\otimes v_2] = \rho(g)[v_1] \otimes v_2 + v_1 \otimes  \rho'(g)[v_2] .$$ 
We refer to the Young diagram representation $\rho_\lambda \otimes \rho_1$ simply as~$\lambda$. 
The symmetric and exterior power of the defining representation are associated to the partitions of the form $(k)$ and $(1,...,1)$, respectively.

\subsection{General Lie algebra weight systems}
In this subsection we recall the general construction of Lie algebra weight system, and then we specialise this construction to the case at hand -- see~\cite[Ch.~6]{ChVassiliev}, \cite[Ch.~14]{JacMofIVI}, \cite[Sec.~2]{BarNat96}, and references therein for a comprehensive overview.

The main ingredients in the definitions of Lie algebra weight system on $\Apb_N$ are
\begin{enumerate}
\item a (finite-dimensional complex) Lie algebra $\g$;
\item an ${\rm ad}$-invariant non-degenerate bi-linear form $\langle \cdot, \cdot \rangle$ on $\g$; 
\item an ordered collection of finite-dimensional $\g$-representations $\underline{\rho} = (\rho_1, ..., \rho_N)$, called \emph{labelling} where $\rho_i:\g \to {\rm End}(V_i)$ for each $i=1,...,N$.
\end{enumerate}
The basic idea is to associate to each horizontal chord diagram $C\in \Apb_N$ an element in ${\rm End}(V_1 \otimes \cdots \otimes V_N)$, and then take the trace to obtain a complex number.

Firstly, fix an orthonormal basis (with respect to $\langle \cdot , \cdot \rangle$) for $\g$, say $e_1,...,e_d$. This choice, and the fact that $\langle \cdot , \cdot \rangle$ is non-degenerate, allow us to identify the aforementioned bi-linear form with the element 
$$\sum_{i=1}^{{\rm dim}(\g)} e_i  \otimes e_i  \in \g \otimes \g .$$
We can decompose, by creating a unique minima on each chord, each horizontal chord diagram in local pieces like the ones shown in Figure~\ref{fig:dictionary}, which correspond to the illustrated maps. Interpreting horizontal juxtaposition as tensor product, and vertical composition as the usual composition, we obtain the desired map. (Crossings between chords and Wilson lines have no meaning.) 
\begin{figure}
    \centering
    \begin{tikzpicture}[xscale = .75]

    \begin{scope}[shift = {+(-1.5,0)}]
    \draw[ very thick, -latex] (0,0) node[below] {$V_i$} -- (0,2) node[above] {$V_i$};
    \draw[fill, red] (0,1) circle (.075);
    \draw[thick, red] (0,1) -- (1,0) node[below] {$\g$};
    \node[below] at (.5,0) {$\otimes$};
    \node[below] at (.5,-1) {$\rho_i$};
    \end{scope}
    \begin{scope}[shift = {+(2,0)}]
    \draw[ very thick, -latex] (0,0) node[below] {$V_i$} -- (0,2) node[above] {$V_i$};
    \draw[fill, red] (0,1) circle (.075);
    \draw[thick, red] (0,1) -- (-1,0) node[below] {$\g$};
    \node[below] at (-.5,0) {$\otimes$};
    \node[below] at (-.5,-1) {$\rho_i$};
    \end{scope}

    \draw[ very thick, -latex] (-3.5,0) node[below] {$V_i$} -- (-3.5,2) node[above] {$V_i$};
    \node[below] at (-3.5,-1) {${\rm Id}_{V_i}$};
    
    \draw[thick, red] (3,1) node[above] {$\g$} .. controls +(.5,-.5) and +(-.5,-.5) .. (5,1) node[above] {$\g$};
    \node[below] at (4.25,-1) {$1 \mapsto  \langle \cdot,\cdot \rangle$};
    \node[above, red] at (4 ,1) {$\otimes$};
    \node[below,red] at (4 ,0) {$\bC$};

    \end{tikzpicture}
    \caption{Dictionary between graphical and abstract construction of Lie algebra weight systems.}
    \label{fig:dictionary}
\end{figure}
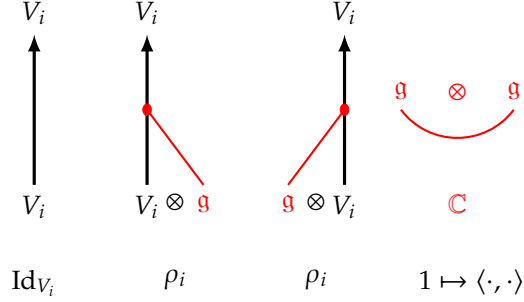
More explicitly, to each chord $C_{i,j}^{N} = [(i,j)]\in \Apb_N$ we are associating the element
\[ \widetilde{W}_{\underline{\rho}}(C_{i,j}^{N}) = \sum_{r=1}^{{\rm dim}(\g)} {\rm Id}_{V_1}\otimes \cdots
\otimes \overset{\text{$i$-th pos.}}{\rho_i(e_r)}
\otimes \cdots
\otimes \overset{\text{$j$-th pos.}}{\rho_j(e_r)}\otimes \cdots \otimes  {\rm Id}_{V_N}\, .\]
It can be shown that $\widetilde{W}_{{\underline{\rho}}}$ induces a well-defined morphism of $\star$-algebras between $\Apb_N$ and ${\rm End}(V^{\otimes N})$, cf.~\cite[Sec.~2.1]{BarNat96}, \cite[Sec.~14.2-.3]{JacMofIVI}. 
We can now give the definition of Lie algebra weight system.

\begin{defn}
Given a metrised Lie algebra $(\g,\langle \cdot , \cdot \rangle )$ and an ordered collection $\underline{\rho} = (\rho_1, ..., \rho_N)$ of $\g$-representations, the corresponding \emph{Lie algebra weight system} is given by
\[ W_{\underline{\rho}} (C)  = {\rm Tr}(\widetilde{W}_{\underline{\rho}} (C)), \]
for each $C\in \Apb_N$.
\end{defn}

\begin{rem}\label{rem:phi(1)}
For each $\g$-weight system $W_{\underline{\rho}}(\uparrow_{N}) = {\rm Tr} ({\rm Id}_{V_1 \otimes \cdots \otimes V_N})=  \prod_i {\rm dim}(V_i) > 0$.
\end{rem}

The $\gln$-weight system $W_{\tt st}$, associated to the bi-linear form $\langle A , B \rangle = {\rm Tr}( AB)$ and to the defining representation, admits a simple combinatorial description in terms of permutations, as recalled in Equation~\eqref{eq:Wst} -- cf.~\cite{BarNat96}.
There exists a similar description for more general $\gln$-weight system -- compare with~\cite{Nag03}.
\begin{defn}
Let $i_{0}\in \{1,...,N\}$, $0\leq k \leq N-i_0$, and $\lambda$ be a partition of $k$.
The \emph{small Young symmetriser} $c_{(N,i_0,\lambda)}\in \bC[\gS_N]$ is the image of the Young symmetriser $c_\lambda\in \bC [\gS_k]$ via the inclusion
\[ \bC[\gS_k] \overset{\cong}{\longrightarrow} \bC[\gS_{\{i_0 + 1 ,..., i_0 + k\}}] \subset \bC[\gS_N]\ ,\]
where $\gS_{\{i_0 + 1 ,..., i_0 + k\}}$ denotes the group of permutations of the set $\{i_0 + 1 ,..., i_0 + k\}$.
Given a vector of $\gln$-representations $\underline{\rho} = (\lambda^1,...,\lambda^N)$, we can define the 
\emph{Young symmetriser associated to $\underline{\rho}$} as the product
\[ c_{\underline{\rho}}\coloneqq \prod_{i+1}^{n} c_{(N,\lambda^i, \sum_{j< i} \vert \lambda^j\vert )}   \in \bR [\gS_{\vert {\underline{\rho}}\vert}]\subset \bC [\gS_{\vert {\underline{\rho}}\vert}]\ . \]
\end{defn}
The weight system $W_{\underline{\rho}}$ can be also defined as the following composition:
\[ \Apb_N \overset{\Delta^{(\vert \lambda^1\vert ,..., \vert \lambda^N\vert )}}{\longrightarrow} \Apb_{\vert {\underline{\rho}}\vert} \overset{\sigma}{\longrightarrow} \bC[\gS_{\vert {\underline{\rho}}\vert}] \overset{c_{\underline{\rho}}}{\longrightarrow} \bC[\gS_{\vert {\underline{\rho}}\vert}] \overset{w_{\tt st}}{\longrightarrow} \mathbb{C}\ ,\]
where $\vert {\underline{\rho}}\vert \coloneqq \sum_{j \leq N} \vert \lambda^j\vert$ -- cf.~\cite{Nag03, NagTak07} . Roughly speaking, we are writing each $\lambda^i$ as the composition of the representation $\rho_{\tt st}^{\otimes \vert \lambdaî\vert}$ -- encoded by $\Delta^{(\vert \lambda^1\vert ,..., \vert \lambda^N\vert )}$, cf.~\cite[Def.~2.2]{BarNat96} -- and the projection onto $\bS_{\lambda}(\bC)$ -- encoded by the multiplication by $c_{\underline{\rho}}$ -- and taking the trace by applying $w_{\tt st}$.

\section{Proof of the main result}

In \cite{corfield2021fundamental} it was proven that the standard $\gln$ weight system is a quantum state. The proof goes as follows: first, one observes that $W_{\tt st}$ factors as
\begin{equation*}
\begin{tikzcd}[row sep=1.5em, column sep = 1.5em]
\Apb_N \arrow[d, "\sigma"']\arrow[r, "W_{\tt st}" ] & \bC\\
\bC[\gS_N] \arrow[ur, "w_{\tt st}"' ]
 \end{tikzcd}
\end{equation*}
where $\sigma$ is the morphism of $\star$-algebras defined in Subsection~\ref{sec:hcd}.
The key observation made in \cite{corfield2021fundamental} is that $w_{\tt st}$ is the kernel of the Cayley distance function, which is a class functions (i.e.~it is constant on conjugacy classes). Eigenvalues and eigenvectors of class functions on the symmetric group are known --  cf.~\cite[Section~3]{corfield2021fundamental} and references therein. So one concludes by observing that these eigenvalues are non-negative for all possible $n$. This proves that $w_{\tt st}$ is a quantum state, and the main result of \cite{corfield2021fundamental} is now a consequence of the following easy lemma.
\begin{lemma}\label{lem:morphism_and_states}
Let $\iO$ and $\iO'$ be $\star$-algebras.
If $\phi: \iO' \to \bC$ is a state, and $f: \iO \to \iO'$ is a morphism of $\star$-algebras, then~$\phi \circ f$ is a state.
\end{lemma}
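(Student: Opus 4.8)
The plan is to verify directly the two defining conditions of a quantum state for the composite $\phi\circ f$, deducing each from the corresponding property of $\phi$ together with the facts that $f$ is unital, multiplicative, and commutes with the involutions. I expect no genuine obstacle here: the argument is a one-line unwinding of definitions in each case, which is presumably why the authors call it an ``easy lemma''.

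First I would observe that $\phi\circ f\colon \iO\to\bC$ is $\bC$-linear, being a composition of $\bC$-linear maps. Next, for the positivity condition I would fix $x\in\iO$ and compute $(\phi\circ f)(x\cdot x^\star)$. Since $f$ is a morphism of algebras, $f(x\cdot x^\star) = f(x)\cdot f(x^\star)$; since $f$ commutes with $\star$, $f(x^\star) = f(x)^\star$; hence $f(x\cdot x^\star) = f(x)\cdot f(x)^\star$. Setting $y = f(x)\in\iO'$ and applying the state $\phi$ yields $(\phi\circ f)(x\cdot x^\star) = \phi(y\cdot y^\star)\geq 0$, as required.

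Finally, for the normalisation I would use that a morphism of algebras is unital, so $f(1_\iO) = 1_{\iO'}$ and therefore $(\phi\circ f)(1_\iO) = \phi(1_{\iO'}) > 0$, again because $\phi$ is a state. Since these are precisely the two conditions in the definition of a quantum state, this completes the proof. If anything deserves a moment's care, it is only the bookkeeping observation that both ingredients in the notion of a morphism of $\star$-algebras --- unitality (built into ``morphism of algebras'') and compatibility with $\star$ --- get used, exactly once each.
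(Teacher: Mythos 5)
Your proof is correct and is exactly the routine verification the authors had in mind (the paper states this as an ``easy lemma'' and omits the proof entirely): multiplicativity and $\star$-compatibility of $f$ give $f(xx^\star)=f(x)f(x)^\star$, so positivity follows from that of $\phi$, and unitality gives $(\phi\circ f)(1_{\iO})=\phi(1_{\iO'})>0$. Nothing further is needed.
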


We want to use a similar reasoning to prove our main result. We know that~$W_{\underline{\rho}}$ factors as the following composition:
\[ \Apb_N \overset{\Delta_{\vert {\underline{\rho}}\vert}}{\longrightarrow} \Apb_{\vert {\underline{\rho}}\vert} \overset{\sigma}{\longrightarrow} \bC[\gS_{\vert {\underline{\rho}}\vert}] \overset{c_{\underline{\rho}}}{\longrightarrow} \bC[\gS_{\vert {\underline{\rho}}\vert}] \overset{w_{\tt st}}{\longrightarrow} \mathbb{C}\ .\]
While the map $\sigma \circ \Delta_{\vert \underline{\rho}\vert}$ is a morphism of $\star$-algebras, the map $c_{\underline{\rho}}$ is not -- for any choice of ${\underline{\rho}}$ which is not ${\tt st}$. Therefore, we cannot conclude using  Lemma~\ref{lem:morphism_and_states}.
Nonetheless, we can show that the composition $w_{\tt st} \circ c_{{\underline{\rho}}}$ is a state for suitable choices of ${\underline{\rho}}$, proving our main result. 

\begin{defn}
Let $\iO$ be a $\star$-algebra, and $x\in \iO$.~A linear function $\phi: \iO \to  \bC$ is \emph{class-like with respect to $x$} if $\phi(xy) = \phi(y x)$, for each $y\in\iO$.
\end{defn}

Linear functions arising from class functions are class-like functions

\begin{ex}\label{ex:class-like}
If $f: \gS_m \to \bC$ is a class function (i.e.~it is constant in each conjugacy class), then
\[\phi\left(\sum_{i} z_i \sigma_i\right) \coloneqq \sum_{i} z_i f(\sigma_i)\ ,\]
is class-like for all $c$. 
In fact, given $c = \sum_{j} x_j \eta_j $ and $x = \sum_{i} z_i \sigma_i$, we have:
\[\phi\left(c x\right) = \sum_{i, j} x_jz_i f(\eta_j\sigma_i)  =  \sum_{i, j} z_i x_jf(\sigma_i\eta_j) = \phi\left( x c\right)\ .\]
\end{ex}

States which are class-like with respect to special elements can be used to produce new states, as shown in the following lemma.

\begin{lemma}\label{lem:class-like_and_idempotents}
Let $c \in \iO$ be  such that $c^2 = \alpha c$, $\alpha \in \bR_{> 0}$, and  $c^\star = c$. If a state $\phi: \iO \to  \bC$ is class-like with respect to $c$, then $\psi = \phi \circ (c \cdot)$ is also a state.
\end{lemma}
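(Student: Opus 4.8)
The plan is to check directly that $\psi\colon\iO\to\bC$, $\psi(x)=\phi(cx)$, satisfies the two requirements in the definition of a quantum state: positivity, $\psi(xx^\star)\ge 0$ for every $x\in\iO$, and $\psi(1_\iO)>0$. The only ingredients available are the three hypotheses on $c$ (idempotent up to the positive scalar $\alpha$, and self-adjoint), the hypothesis that $\phi$ is class-like with respect to $c$, and the fact that $\phi$ is itself a state; no representation theory enters, which is why this is, in the paper's words, an easy lemma.

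The substantive point is positivity, and the trick is to bring all three properties of $c$ into play at once by writing $c=\alpha^{-1}c^{2}$. Concretely, I would compute
\[
\psi(xx^\star)=\phi(c\,xx^\star)=\alpha^{-1}\phi(c\cdot(c\,xx^\star)),
\]
and then use the defining property of class-likeness of $\phi$ with respect to $c$, applied to $y=c\,xx^\star$, to move the leading $c$ to the other side:
\[
\alpha^{-1}\phi(c\cdot(c\,xx^\star))=\alpha^{-1}\phi((c\,xx^\star)\cdot c)=\alpha^{-1}\phi((cx)(x^\star c)).
\]
Because $c^\star=c$ one has $x^\star c=x^\star c^\star=(cx)^\star$, so the last expression equals $\alpha^{-1}\phi((cx)(cx)^\star)$, which is $\ge 0$ since $\phi$ is a state and $\alpha\in\bR_{>0}$. (One may equivalently first pass to the honest self-adjoint idempotent $p=\alpha^{-1}c$ and run the same computation with $p^2=p=p^\star$, the scalar reappearing only in $\psi=\alpha\,\phi(p\cdot)$.) This is the step I expect to be the heart of the matter: not that it is long, but that it requires noticing that $c$ should be expanded quadratically and that class-likeness is to be invoked exactly once, so that the two copies of $c$ split off as $cx$ and $(cx)^\star$.

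For the unit, the same manipulation gives $\psi(1_\iO)=\phi(c)=\alpha^{-1}\phi(c^{2})=\alpha^{-1}\phi(cc^\star)\ge 0$, once more using $c^\star=c$; what remains is strict positivity, i.e.\ $\phi(c)\ne 0$. This is the one loose end: were $\phi(c)=0$, a Cauchy--Schwarz-type estimate for the state $\phi$ would force $\phi(cx)=0$ for all $x$, hence $\psi\equiv 0$, so strictness must be supplied from the hypotheses --- either by the (harmless) extra assumption that $\phi$ does not annihilate $c$, or, in the application motivating the lemma, where $\iO=\bC[\gS_{\vert\underline{\rho}\vert}]$, $\phi=w_{\tt st}$, and $c=c_{\underline{\rho}}$, from the identification of $\phi(c)$ with the positive dimension recorded in Remark~\ref{rem:phi(1)}. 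Apart from this, the whole argument is a two-line formal calculation.
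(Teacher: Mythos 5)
Your positivity argument is exactly the paper's proof: write $c=\alpha^{-1}c^{2}$, invoke class-likeness once to move one factor of $c$ to the right, and use $c^\star=c$ to recognise $\alpha^{-1}\phi\bigl((cx)(cx)^\star\bigr)\geq 0$. The loose end you flag about $\psi(1_{\iO})=\phi(c)>0$ is a real gap in the lemma as stated, but the paper's own proof omits it too and instead secures positivity of the unit separately in the application (checking $W_{\underline{\rho}}(\uparrow_N)>0$ directly via Remark~\ref{rem:phi(1)} in the proof of Theorem~\ref{thm:main}), so your account matches the paper's and correctly identifies where the missing hypothesis is discharged.
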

\begin{proof} Let $a \in \iO$, then
\[\psi (a a^{\star}) = \tfrac{1}{\alpha}\phi (c^2 a a^{\star}) = \tfrac{1}{\alpha}\phi ( c a a^{\star}c) = \tfrac{1}{\alpha}\phi ( c a a^{\star}c^\star) =  \tfrac{1}{\alpha}\phi (c a (ca)^{\star})  \geq 0.\]
\end{proof}

We remark that the hypothesis $c = c^\star$ plays a key role in the above lemma. Now, we are ready to prove the main theorem.

\begin{proof}[Proof of Theorem~\ref{thm:main}]
Let ${\underline{\rho}}= ( \lambda^1 ,..., \lambda^N)$ be a labelling such that each $\lambda_i$ is either a symmetric or an exterior power of the standard representation. 
Since $W_{\underline{\rho}}(\uparrow_N)>0$, see Remark~\ref{rem:tensor split is morphism}, we only have to prove that $W_{\underline{\rho}}(CC^\star)\geq 0$, for each $C\in\Apb_N$.
We recall that the following triangle
\begin{equation*}
\begin{tikzcd}[row sep=1.5em, column sep = 1.5em]
\Apb_N \arrow[d, "\sigma\circ \Delta^{(\vert \lambda^1\vert ,..., \vert \lambda^N\vert )}"']\arrow[r, "W_{\tt st}" ] & \bC\\
\bC[\gS_{\vert \underline{\rho}\vert}] \arrow[ur, "w_{\tt st}\circ (c_{\underline{\rho}}\cdot)"' ]
 \end{tikzcd}
\end{equation*}
is commutative.
From Proposition~\ref{prop:sigma} it follows that $\sigma \circ \Delta_{\rho}$ is a morphism of $\star$-algebras. Thence, by Lemma~\ref{lem:morphism_and_states}, in order to show that $W_{\underline{\rho}}$ is a state it is sufficient to prove that $w_{\tt st}\circ (c_{\underline{\rho}}\cdot)$ is a state. 
Note that $c_{\underline{\rho}}$ is idempotent since small Young symmetrisers are idempotent and their actions commute (as they have disjoint support).
Furthermore, since each $\lambda^i$ is either a symmetric or exterior power of the defining representation, , the commutativity of the small Young symmetriser and Remark~\ref{rem:expansion_symmetriser}, ensure that $c_{\underline{\rho}} = c_{\underline{\rho}}^\star$.
Since $w_{\tt st}$ is state-like with respect to $c_{\underline{\rho}}$ (cf.~Example~\ref{ex:class-like}) the statement follows directly from Lemma~\ref{lem:class-like_and_idempotents}
\end{proof}

\bibliographystyle{plain}
\bibliography{references}

\end{document}